\documentclass{amsart}
\usepackage[utf8]{inputenc}
\usepackage{latexsym,amssymb,amsmath,amsthm,amscd,graphicx}
\usepackage{amsfonts}
\usepackage{xcolor}
\usepackage[english]{babel}
\usepackage[a4paper,bindingoffset=0.2in,%
            left=1in,right=1in,top=1in,bottom=1in,%
            footskip=.5in]{geometry}
\newtheorem{theorem}{Theorem}[section]

\begin{document}

\title[Generalized Von Neumann-Jordan constant for Morrey spaces]{Generalized Von Neumann-Jordan
Constant for Morrey Spaces and Small Morrey Spaces}

\author[H. Rahman]{Hairur Rahman}
\address{Departement of Mathematics, Islamic State University Maulana Malik Ibrahim Malang,
Jalan Gajayana No.~50, Indonesia}
\email{hairur@mat.uin-malang.ac.id}


\author[H. Gunawan]{Hendra Gunawan}
\address{Analysis and Geometry Group, Faculty of Mathematics and Natural Sciences, Bandung
Institute of Technology, Bandung 40132, Indonesia}
\email{hgunawan@math.itb.ac.id}

\subjclass[2010]{46B20}

\keywords{Generalized Von Neumann-Jordan constant, Morrey spaces, small Morrey spaces}

\begin{abstract}
In this paper we calculate some geometric constants for Morrey spaces and small Morrey spaces,
namely generalized Von Neumann-Jordan constant, modified Von Neumann-Jordan constants,
and Zb\'{a}ganu constant. All these constants measure the uniformly nonsquareness of the spaces.
We obtain that their values are the same as the value of Von Neumann-Jordan constant for Morrey
spaces and small Morrey spaces.
\end{abstract}

\maketitle

\lineskip=12pt

\section{Introduction}

We shall discuss some geometric constants for Banach spaces. Three geometric constants have been
studied by Gunawan {\it et al.} \cite{Gun}, but there are other geometric constants which were
introduced by other authors. Inspired by Clarkson \cite{Clar}, who introduced the Von Neumann-Jordan
constant for a Banach space $(X,\|\cdot\|_X)$, Cui {\it et al.}~\cite{Cui} defined the generalized
Von Neumann-Jordan constant $C_{NJ}^{(s)}(X)$ by
$$
C_{NJ}^{(s)}(X) := \sup \left\lbrace \frac{\Vert x+y\Vert_X^s + \Vert x-y\Vert_X^s}{2^{s-1}
(\Vert x\Vert_X^s +\Vert y\Vert_X^s)} : x,y\in X\backslash \lbrace 0\rbrace\right\rbrace ,
$$
for $1\le s<\infty$. Observe that $1\leq C_{NJ}^{(s)}(X) \leq 2$ for any Banach space $X$.

Several years earlier, Alonso {\it et al.}~\cite{Alon} and Gao \cite{Gao} studied the modified
Von Neumann-Jordan constant, which is defined by
$$
C'_{NJ}(X) := \sup \left\lbrace \frac{\Vert x+y\Vert_X^2 + \Vert x-y\Vert_X^2}{4} : x,y\in X,
\Vert x\Vert_X = \Vert y\Vert_X = 1 \right\rbrace.
$$
Note that $1\leq C'_{NJ}(X) \leq C_{NJ}(X)\leq 2$ is for any Banach space $X$.
This constant was generalized by Yang {\it et al.}~\cite{Yang} to the following constant
$$
\bar{C}_{NJ}^{(s)}(X) := \sup \left\lbrace \frac{\Vert x+y\Vert_X^s + \Vert x-y\Vert_X^s}{2^s} :
x,y\in X, \Vert x\Vert_X = \Vert y\Vert_X = 1 \right\rbrace.
$$
It is proved in \cite{Yang} that $\bar{C}_{NJ}^{(s)}(X)\leq C_{NJ}^{(s)}(X) \leq 2^{s-1}\left[1+\left(
2^{\frac{1}{s}}\left(C'_{NJ}(X)\right)^{\frac{1}{s}} -1\right)\right]^{s-1}$ for any Banach space $X$.

Beside the above constants, we also know a constant called Zb\'{a}ganu constant, which is defined by
$$
C_Z(X) := \sup \left\lbrace \frac{\Vert x+y\Vert_X \Vert x-y\Vert_X}{\Vert x\Vert_X^2 +\Vert y\Vert_X^2} :
x,y\in X\backslash \lbrace 0\rbrace \right\rbrace.
$$
This constant was introduced by Zb\'{a}ganu \cite{Ganu} and studied further in \cite{Lor,Lor1}. It is
easy to see that $1\leq C_Z(X) \leq C_{NJ}(X)\leq 2$ for any Banach space X.

In this paper, we want to calculate the values of those constants for Morrey spaces as well as for
small Morrey spaces.

Let $1\leq p\leq q<\infty$. The Morrey spaces $\mathcal{M}^p_q = \mathcal{M}^p_q(\mathbb{R}^n)$ is
the set of all measurable function $f$ such that
$$
\Vert f\Vert_{\mathcal{M}^p_q} :=\sup_{a\in \mathbb{R}^n , r>0} \vert B(a,r)\vert^{\frac{1}{q} -\frac{1}{p}}
\left(\int_B \vert f(x)\vert^p dx\right)^{\frac{1}{p}}
$$
is finite. Here $B(a,r) = \lbrace x\in \mathbb{R}^n :\vert x-a\vert <r \rbrace$ dan $|B(a,r)|$ denotes its
Lebesgue measure. Since $\mathcal{M}^p_q$ is a Banach space, it follows from \cite{Gao1} that
$$
C_{NJ}^{(s)}(\mathcal{M}^p_q),\ C'_{NJ}(\mathcal{M}^p_q),\ \bar{C}_{NJ}^{(s)}(\mathcal{M}^p_q),\ C_Z(\mathcal{M}^p_q)\leq 2.
$$
Meanwhile, the small Morrey $m^p_q=m^p_q(\mathbb{R}^n)$ is the set of all measurable function $f$ such that
$$
\Vert f\Vert_{m^p_q} := \sup_{a\in \mathbb{R}^n , r\in (0,1)} \vert B(a,r)\vert^{\frac{1}{q} -\frac{1}{p}}
\left(\int_B \vert f(x)\vert^p dx\right)^{\frac{1}{p}}
$$
is finite. Small Morrey spaces are also Banach spaces \cite{Saw} and, for all $p$ and $q$, the small Morrey spaces
properly contain the Morrey spaces.

The values of Von Neumann-Jordan constants for Morrey spaces and small Morrey spaces have been
computed by Gunawan {\it et al.} \cite{Gun} and Mu'tazili and Gunawan in \cite{Mut}, respectively.
In this paper, we want to calculate other geometric constants defined above.
Our results are presented in the following sections.

\section{Main Results}

The values of the geometric constants defined in the previous section for Morrey spaces and small
Morrey spaces are presented in the following theorems.

\begin{theorem}
\label{Teo1}
For $1\leq p < q < \infty$ and $1\le s<\infty$, we have
$$
C_{NJ}^{(s)}(\mathcal{M}^p_q) = C'_{NJ}(\mathcal{M}^p_q) = \bar{C}_{NJ}^{(s)}(\mathcal{M}^p_q)=
C_Z(\mathcal{M}^p_q) = 2.
$$
\end{theorem}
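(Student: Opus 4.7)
The plan is to reduce all four equalities to the production of a single pair $u,v\in\mathcal{M}^p_q$ with $\|u\|_{\mathcal{M}^p_q}=\|v\|_{\mathcal{M}^p_q}=1$ and $\|u+v\|_{\mathcal{M}^p_q}=\|u-v\|_{\mathcal{M}^p_q}=2$. Since the upper bound $\leq 2$ is already in hand for each of the four constants, such a pair provides matching lower bounds: substituting into the defining ratios gives $(2^s+2^s)/(2^{s-1}\cdot 2)=2$ for $C_{NJ}^{(s)}$, $(4+4)/4=2$ for $C'_{NJ}$, $(2^s+2^s)/2^s=2$ for $\bar C_{NJ}^{(s)}$, and $(2\cdot 2)/(1+1)=2$ for $C_Z$. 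Thus the entire theorem follows from the construction of one such pair.

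For the construction I would mimic inside $\mathcal{M}^p_q$ the two-dimensional $\ell^\infty$ picture $\{(1,1),(1,-1)\}$. Fix two disjoint unit balls $B_1=B(a_1,1)$ and $B_2=B(a_2,1)$ with $|a_1-a_2|=N$ for a large parameter $N$ to be chosen, and set $u=\chi_{B_1}+\chi_{B_2}$ and $v=\chi_{B_1}-\chi_{B_2}$, so that $u+v=2\chi_{B_1}$ and $u-v=2\chi_{B_2}$. The standard identity $\|\chi_{B(a,r)}\|_{\mathcal{M}^p_q}=|B(a,r)|^{1/q}$ (a routine computation using $1/q-1/p\leq 0$ to see that the supremum is attained on the ball itself) then yields $\|u+v\|_{\mathcal{M}^p_q}=\|u-v\|_{\mathcal{M}^p_q}=2|B(0,1)|^{1/q}$ and $\|\chi_{B_i}\|_{\mathcal{M}^p_q}=|B(0,1)|^{1/q}$, so after rescaling by a single scalar we may assume $\|u\pm v\|=2$ and $\|\chi_{B_i}\|=1$.

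The main obstacle, and the only step requiring real work, is the verification that $\|u\|_{\mathcal{M}^p_q}=\|v\|_{\mathcal{M}^p_q}=1$ rather than something strictly larger. Since $|u|^p=|v|^p=\chi_{B_1}+\chi_{B_2}$, any ball $B(a,R)$ meeting at most one of $B_1,B_2$ contributes at most $\|\chi_{B_i}\|_{\mathcal{M}^p_q}=1$ by the same single-ball computation. The only new case is a ball $B(a,R)$ that meets both $B_1$ and $B_2$, which forces $R$ to be of order $N$; on such a ball the Morrey quantity is bounded by $|B(a,R)|^{1/q-1/p}\cdot(2|B(0,1)|)^{1/p}$, and because $p<q$ the exponent $n/q-n/p$ is strictly negative, so this contribution tends to $0$ as $N\to\infty$. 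Choosing $N$ large enough to make it $\leq 1$ gives $\|u\|_{\mathcal{M}^p_q}=\|v\|_{\mathcal{M}^p_q}=1$. Plugging $(u,v)$ into the four defining suprema then produces the lower bound $2$ for every constant, matching the upper bound. The hypothesis $p<q$ is indispensable precisely in controlling the long-range term, and this is where the argument would fail for $L^p=\mathcal{M}^p_p$.
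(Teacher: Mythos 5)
Your proof is correct, but it takes a genuinely different route from the paper's. The paper splits the single power function $f(x)=|x|^{-n/q}$ at the unit sphere into $g=f\chi_{\{|x|<1\}}$ and $h=f\chi_{\{|x|\ge 1\}}$, and uses the pair $(f,g-h)$; the key facts are that $\Vert f\Vert_{\mathcal{M}^p_q}=c_n(1-p/q)^{-1/p}$ and that $f$, $g$, $h$, $g-h$ all have equal norm, which rests on the dilation structure of $\mathcal{M}^p_q$ (the supremum for $g$ is saturated by small balls at the origin and the supremum for $h$ by large ones). Your construction instead transplants the $\ell^\infty_2$ extremal pair via two widely separated unit balls, $u=\chi_{B_1}+\chi_{B_2}$, $v=\chi_{B_1}-\chi_{B_2}$, and the only nontrivial point is that balls meeting both $B_1$ and $B_2$ have measure of order $N^n$, so their contribution $|B(a,R)|^{1/q-1/p}(2|B(0,1)|)^{1/p}$ is killed by the negative exponent once $N$ is large; this is exactly where $p<q$ enters, just as it enters the paper's argument through the finiteness of $\Vert\,|x|^{-n/q}\Vert_{\mathcal{M}^p_q}$. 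Your version is arguably more elementary and more portable (it uses only indicator functions and the separation of supports, not the exact norm of a power function, so it would survive in settings without dilation invariance), at the cost of carrying the auxiliary parameter $N$; the paper's version produces exact norm-one extremizers in one stroke but leans on a norm computation it leaves to the reader. Both correctly reduce all four constants to a single pair achieving $\Vert u\pm v\Vert=\Vert u\Vert+\Vert v\Vert$ and then invoke the universal upper bound $2$.
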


\begin{proof}
To prove the theorem, we consider the following functions: $f(x)=\vert x\vert^{-\frac{n}{q}},\
g(x)=\chi_{(0,1)}(\vert x\vert) f(x)$, $h(x)=f(x)-g(x),$ and $k(x)=g(x)-h(x)$, where $x\in \mathbb{R}^n$.
All functions are radial functions. Note that $f \in \mathcal{M}^p_q$, with
\[
\Vert f\Vert_{\mathcal{M}^p_q} = c_n\left(1-\frac{p}{q}\right)^{-\frac{1}{p}}.
\]
One may also observe that
$$
\Vert f\Vert_{\mathcal{M}^p_q} = \Vert g\Vert_{\mathcal{M}^p_q} = \Vert h\Vert_{\mathcal{M}^p_q} =
\Vert k\Vert_{\mathcal{M}^p_q}.
$$
It thus follows that
\begin{align*}
C_{NJ}^{(s)}(\mathcal{M}^p_q) &\geq \frac{\Vert f+k\Vert^s_{\mathcal{M}^p_q} +
\Vert f-k\Vert^s_{\mathcal{M}^p_q}}{2^{s-1} (\Vert f\Vert^s_{\mathcal{M}^p_q} +\Vert k\Vert^s_{\mathcal{M}^p_q})}\\
&=\frac{\Vert 2g\Vert^s_{\mathcal{M}^p_q} + \Vert 2h\Vert^s_{\mathcal{M}^p_q}}{2^{s-1}\left(\Vert f\Vert^s_{\mathcal{M}^p_q}
+\Vert f\Vert^s_{\mathcal{M}^p_q}\right)}\\
&=\frac{2^s\Vert g\Vert^s_{\mathcal{M}^p_q} + 2^s\Vert h\Vert^s_{\mathcal{M}^p_q}}{2^{s-1}\left(2\Vert f\Vert^s_{\mathcal{M}^p_q}\right)}\\
&= 2.
\end{align*}
Since $C_{NJ}^{(s)}(\mathcal{M}^p_q) \leq 2$, we can conclude that $C_{NJ}^{(s)}(\mathcal{M}^p_q) = 2$.

Now, for the modified Von Neumann-Jordan constant, we consider $\frac{f}{\Vert f\Vert_{\mathcal{M}^p_q}}$ and
$\frac{k}{\Vert k\Vert_{\mathcal{M}^p_q}}=\frac{k}{\Vert f\Vert_{\mathcal{M}^p_q}}$. We have
\begin{align*}
C'_{NJ}(\mathcal{M}^p_q) &\geq \frac{\Vert f+k\Vert_{\mathcal{M}^p_q}^2 + \Vert f-k\Vert_{\mathcal{M}^p_q}^2}{4\Vert f\Vert^2_{\mathcal{M}^p_q}}\\
&=\frac{\Vert 2g\Vert^2_{\mathcal{M}^p_q} + \Vert 2h\Vert^2_{\mathcal{M}^p_q}}{4\Vert f\Vert^2_{\mathcal{M}^p_q}}\\
&=\frac{4\left(\Vert g\Vert^2_{\mathcal{M}^p_q} + \Vert h\Vert^2_{\mathcal{M}^p_q}\right)}{4\Vert f\Vert^2_{\mathcal{M}^p_q}}\\
&= 2.
\end{align*}
This shows that $C'_{NJ}(\mathcal{M}^p_q) =2$.

Similarly, for generalized modified Von Neumann-Jordan constant, we have
\begin{align*}
\bar{C}_{NJ}^{(s)}(\mathcal{M}^p_q) &\geq \frac{\Vert f+k\Vert^s_{\mathcal{M}^p_q} +
\Vert f-k\Vert^s_{\mathcal{M}^p_q}}{2^s\Vert f\Vert^s_{\mathcal{M}^p_q}}\\
&=\frac{\Vert 2g\Vert^s_{\mathcal{M}^p_q} + \Vert 2h\Vert^s_{\mathcal{M}^p_q}}{2^s\Vert f\Vert^s_{\mathcal{M}^p_q}}\\
&=\frac{2^s\Vert g\Vert^s_{\mathcal{M}^p_q} + 2^s\Vert h\Vert^s_{\mathcal{M}^p_q}}{2^s\Vert f\Vert^s_{\mathcal{M}^p_q}}\\
&= 2.
\end{align*}
This also leads us to the conclusion that $\bar{C}_{NJ}^{(s)}(\mathcal{M}^p_q) = 2$.

Last, for the Zb\'{a}ganu constant, we have
\begin{align*}
C_Z(\mathcal{M}^p_q) &\geq \frac{\Vert f+k\Vert_{\mathcal{M}^p_q} \Vert f-k\Vert_{\mathcal{M}^p_q}}{\Vert
f\Vert_{\mathcal{M}^p_q}^2 +\Vert k\Vert_{\mathcal{M}^p_q}^2}\\
&=\frac{\Vert 2g\Vert_{\mathcal{M}^p_q} \Vert 2h\Vert_{\mathcal{M}^p_q}}{2\Vert f\Vert_{\mathcal{M}^p_q}^2}\\
&= 2,
\end{align*}
which implies that $C_Z(\mathcal{M}^p_q) = 2$, as desired.
\end{proof}

For small Morrey spaces, we have the following theorem.

\begin{theorem}
Let $1\leq p<q<\infty$ and $1\le s<\infty$. Then
$$
C_{NJ}^{(s)}(m^p_q) = C'_{NJ}(m^p_q) = \bar{C}_{NJ}^{(s)}(m^p_q)= C_Z(m^p_q) = 2.
$$
\end{theorem}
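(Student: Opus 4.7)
The plan is to adapt Theorem~\ref{Teo1}, but its construction does not carry over verbatim. In the Morrey case, the identity $\|h\|_{\mathcal{M}^p_q} = \|f\|_{\mathcal{M}^p_q}$ for $h(x) = \chi_{\{|x|\ge 1\}}(x)\,|x|^{-n/q}$ is realized only by balls of radius $\ge 1$, and such balls are excluded from the definition of $\|\cdot\|_{m^p_q}$. The main obstacle is therefore to produce two ``copies'' of the Morrey-maximizing function whose extremality is detected by balls of radius $<1$ alone, arranged so that the identities $f+k=2g$ and $f-k=2h$ still yield all the norm equalities needed to drive the defining ratios up to $2$.

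The fix is to place two disjoint translates of the Morrey atom far apart. Fix $a_1,a_2\in\mathbb{R}^n$ with $|a_1-a_2|\ge 4$ and set
\[
g(x) := |x-a_1|^{-n/q}\chi_{B(a_1,1)}(x),\qquad h(x) := |x-a_2|^{-n/q}\chi_{B(a_2,1)}(x),
\]
together with $f := g+h$ and $k := g-h$, so that $f+k=2g$ and $f-k=2h$. By translation invariance and the same scaling calculation used in Theorem~\ref{Teo1} (the value $c_n(1-p/q)^{-1/p}$ is already attained on balls $B(a_j,r)$ with $r<1$, where the relevant expression is independent of $r$), we obtain $\|g\|_{m^p_q} = \|h\|_{m^p_q} = c_n(1-p/q)^{-1/p}$. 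The key geometric point is that since $|a_1-a_2|\ge 4$, any ball $B(a,r)$ with $r<1$ can meet at most one of the supports $B(a_j,1)$; hence on every ball admissible for the $m^p_q$-norm at least one of $g,h$ vanishes, so $|g\pm h|$ coincides pointwise with $|g|$ or $|h|$ there. Taking the supremum yields
\[
\|f\|_{m^p_q} = \|k\|_{m^p_q} = \max\bigl(\|g\|_{m^p_q},\|h\|_{m^p_q}\bigr) = c_n(1-p/q)^{-1/p},
\]
while $\|f+k\|_{m^p_q} = 2\|g\|_{m^p_q}$ and $\|f-k\|_{m^p_q} = 2\|h\|_{m^p_q}$ by homogeneity.

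With these four norm values, plugging $f$ and $k$ into the definitions of $C_{NJ}^{(s)}$, $C'_{NJ}$, $\bar{C}_{NJ}^{(s)}$, and $C_Z$ reproduces the arithmetic of Theorem~\ref{Teo1} line-for-line and gives the lower bound $2$ in each case. Combined with the universal upper bound $\le 2$ valid in any Banach space, and noting that $m^p_q$ is a Banach space by \cite{Saw}, we conclude equality in all four constants. The only substantive step beyond Theorem~\ref{Teo1} is the disjoint-supports argument identifying $\|g\pm h\|_{m^p_q}$ with $\|g\|_{m^p_q}$, and this is precisely where the restriction $r<1$ in the small Morrey norm works in our favor.
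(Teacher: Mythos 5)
Your proof is correct, but it takes a genuinely different route from the paper's. The paper keeps a single atom supported in the unit ball, $f(x)=\chi_{(0,1)}(|x|)|x|^{-n/q}$, and splits it at a small radius $\varepsilon$: $g=\chi_{(0,\varepsilon)}f$, $h=f-g$, $k=g-h$. It then computes $\Vert f\Vert_{m^p_q}=\Vert g\Vert_{m^p_q}=\Vert k\Vert_{m^p_q}=c_n(1-p/q)^{-1/p}$ and the lower bound $\Vert h\Vert_{m^p_q}\geq \Vert f\Vert_{m^p_q}\bigl(1-\varepsilon^{n-\frac{np}{q}}\bigr)^{1/p}$, which yields lower bounds of the form $1+\bigl(1-\varepsilon^{n-\frac{np}{q}}\bigr)^{s/p}$ for the constants, and concludes by letting $\varepsilon\to 0$. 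You instead take two far-separated translates of the truncated Morrey atom as $g$ and $h$, and exploit the fact that every ball admissible for the $m^p_q$-norm (radius $<1$, hence diameter $<2$) meets at most one of the two supports, so $\Vert g\pm h\Vert_{m^p_q}=\max(\Vert g\Vert_{m^p_q},\Vert h\Vert_{m^p_q})$. This gives all four norm identities exactly and reproduces the Morrey-space arithmetic verbatim, so the supremum value $2$ is attained rather than approached in a limit; the price is the (easy but essential) geometric separation lemma, and the payoff is that you avoid the paper's slightly delicate computation of $\Vert h\Vert_{m^p_q}$ and the $\varepsilon$-limiting argument altogether. Both arguments are sound; yours is arguably cleaner and shows the stronger statement that the defining suprema are attained.
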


\begin{proof}
The idea of the proof is similar to that of Theorem \ref{Teo1}, but we use different functions. For $\varepsilon \in(0,1)$,
we consider $f(x)=\chi_{(0,1)}(|x|)\vert x\vert^{-\frac{n}{q}},\ g(x)=\chi_{(0,\varepsilon)}(\vert x\vert) f(x),\
h(x)=f(x)-g(x),$ and $k(x)=g(x)-h(x)$, where $x\in \mathbb{R}^n$. Here $g$ depends on $\varepsilon$, so that
$h$ and $k$ also depend on $\varepsilon$. Since all functions are radial functions, it is not hard to compute their
norms. First, we obtain that
\[
\Vert f\Vert_{m^p_q} = \Vert g\Vert_{m^p_q}= \Vert k\Vert_{m^p_q} = c_n \left(1-\frac{p}{q}\right)^{-\frac{1}{p}}.
\]
Next, we observe that
\begin{align*}
\Vert h\Vert_{m^p_q}&= \sup_{a\in \mathbb{R}^n, r\in (0,1)} \vert B(a,r)\vert^{\frac{1}{q} -\frac{1}{p}}
\left(\int_{B(a,r)} \vert f(x)\left(1-\chi_{(0,1)} (\vert x\vert)\right)\vert^p dx\right)^{\frac{1}{p}}\\
&\geq \sup_{r\in (0,1)} Cr^{n\left(\frac{1}{q} -\frac{1}{p}\right)} \left(\int_{B(0,r)} \vert x\vert^p
\left(1-\chi_{(0,1)} (\vert x\vert)\right) dx\right)^{\frac{1}{p}}\\
&= \sup_{r\in (\varepsilon,1)} Cr^{n\left(\frac{1}{q} -\frac{1}{p}\right)} \left(\int_{B(\varepsilon,r)}
\vert x\vert^p dx\right)^{\frac{1}{p}}\\
&= \sup_{r\in (\varepsilon,1)} Cr^{n\left(\frac{1}{q} -\frac{1}{p}\right)} \left(\int_{\varepsilon}^r
r^{\frac{-np}{q}} r^{n-1} dr\right)^{\frac{1}{p}}\\
&= \sup_{r\in (\varepsilon,1)} C\left[n\left(1-\frac{p}{q}\right)\right]^{-\frac{1}{p}} \left(1-r^{\frac{np}{q}-n}
\varepsilon^{n-\frac{np}{q}}\right)^{\frac{1}{p}}\\
&= \Vert f\Vert_{m^p_q} \left(1-\varepsilon^{n-\frac{np}{q}}\right)^{\frac{1}{p}}.
\end{align*}

We can now calculate the constants. First, let us observe the generalized Von Neumann-Jordan constant:
\begin{align*}
C_{NJ}^{(s)}(m^p_q) &\geq \frac{\Vert f+k\Vert^s_{m^p_q} + \Vert f-k\Vert^s_{m^p_q}}{2^{s-1}
\left(\Vert f\Vert^s_{m^p_q} +\Vert k\Vert^s_{m^p_q}\right)}\\
&= \frac{\Vert 2g\Vert^s_{m^p_q} + \Vert 2h\Vert^s_{m^p_q}}{2^{s-1} \left(2\Vert f\Vert^s_{m^p_q}\right)}\\
&\ge \frac{2^s\Vert f\Vert^s_{m^p_q} + 2^s\Vert f\Vert^s_{m^p_q}\left(1-\varepsilon^{n-\frac{np}{q}}\right)^{\frac{s}{p}}}{2^{s}
\Vert f\Vert^s_{m^p_q}}\\
&= 1+(1-\varepsilon^{n-\frac{np}{q}})^{\frac{s}{p}}.
\end{align*}
Since we may choose $\varepsilon$ to be arbitrary small, we conclude that $C_{NJ}^{(s)}(m^p_q) = 2$ (for we know that
the constant cannot be larger than 2).

We now move to the modified Von Neumann-Jordan constant. Noting that $\|f\|_{m^p_q}=\|k\|_{m^p_q}$,
we consider $\frac{f}{\Vert f\Vert_{m^p_q}}$ and $\frac{k}{\Vert f\Vert_{m^p_q}}$. We have
\begin{align*}
C'_{NJ}(m^p_q)&\ge\frac{\Vert f+k\Vert_{m^p_q}^2 + \Vert f-k\Vert_{m^p_q}^2}{4\Vert f\Vert^2_{m^p_q}}\\
&= \frac{\Vert 2g\Vert^2_{m^p_q} + \Vert 2h\Vert^2_{m^p_q}}{4\Vert f\Vert^2_{m^p_q}}\\
&\ge \frac{4\Vert f\Vert^2_{m^p_q} + 4\Vert f\Vert^2_{m^p_q}\left(1-\varepsilon^{n-\frac{np}{q}}\right)^{\frac{2}{p}}}{4\Vert f\Vert^2_{m^p_q}}\\
&= 1+(1-\varepsilon^{n-\frac{np}{q}})^{\frac{2}{p}}.
\end{align*}
By using similiar arguments as above, we conclude that $C'_{NJ}(m^p_q) = 2$.

For the generalization of the modified Von Neumann-Jordan constant, we observe that
\begin{align*}
\bar{C}_{NJ}^{(s)}(m^p_q)
&\ge \frac{\Vert f+k\Vert^s_{m^p_q} + \Vert f-k\Vert^s_{m^p_q}}{2^s\Vert f\Vert^s_{m^p_q}}\\
&= \frac{\Vert 2g\Vert^s_{m^p_q} + \Vert 2h\Vert^s_{m^p_q}}{2^s\Vert f\Vert^s_{m^p_q}}\\
&\ge \frac{2^s\Vert f\Vert^s_{m^p_q} + 2^s\Vert f\Vert^s_{m^p_q}\left(1-\varepsilon^{n-\frac{np}{q}}\right)^{\frac{s}{p}}}{2^s\Vert f\Vert^s_{m^p_q}}\\
&= 1+(1-\varepsilon^{n-\frac{np}{q}})^{\frac{s}{p}}.
\end{align*}
With the same arguments as above, we conclude that $\bar{C}_{NJ}^{(s)}(m^p_q) = 2$.

Last, for the Zb\'{a}ganu constant, we have
\begin{align*}
C_Z(m^p_q) &\geq \frac{\Vert f+k\Vert_{m^p_q} \Vert f-k\Vert_{m^p_q}}{\Vert f\Vert_{m^p_q}^2 +\Vert k\Vert_{m^p_q}^2}\\
&= \frac{4\Vert g\Vert_{m^p_q} \Vert h\Vert_{m^p_q}}{2\Vert f\Vert_{m^p_q}^2}\\
&\ge \frac{2\Vert f\Vert_{m^p_q}^2 \left(1-\varepsilon^{n-\frac{np}{q}}\right)^{\frac{1}{p}}}{\Vert f\Vert_{m^p_q}^2}\\
&= 2\left(1-\varepsilon^{n-\frac{np}{q}}\right)^{\frac{1}{p}},
\end{align*}
By the same arguments, we conclude that $C_Z(m^p_q) = 2$.
\end{proof}

{\parindent=0cm
\textbf{Acknowledgement.} This first author is supported by UIN Maliki Research and Innovation
Program 2019. The second author is supported by ITB Research and Innovation Program 2020. 
}


\begin{thebibliography}{999}

\bibitem{Alon} J. Alonso, P. Martin, and P.L. Papini,
``Wheeling around Von Neumann-Jordan constant in Banach spaces'',
{\em Stud. Math.} {\bf 188} (2008), 135--150

\bibitem{Clar} J.A. Clarkson,
``The Von Neumann-Jordan constant for the Lebesgue spaces'',
{\em Ann. Math.} {\bf 38} (1937), 114--115

\bibitem{Cui} Y. Cui, W. Huang, H. Hudzik, and R. Kaczmarek,
``Generalized Von Neumann-Jordan constant and its relationship to the fixed point property'',
{\em Fixed Point Theory Appl.} {\bf 40} (2015), 1--11

\bibitem{Gao} J. Gao,
``Pythagorean approach in Banach spaces'',
{\em J. Inequal. Appl.} {\bf 40} (2006), 1--11

\bibitem{Gao1} J. Gao and K.S Lau,
``On the geometry of spheres in normed linear spaces'',
{\em J. Aust. Math. Soc. Ser. A} {\bf 48} (1990),
101--112

\bibitem{Gun1} H. Gunawan, D.I. Hakim, and M. Idris,
``Proper inclusions of Morrey spaces'',
{\em Glasnik Matematicki} {\bf 53} (2018), 143--151

\bibitem{Gun} H. Gunawan, E. Kikianty, Y. Sawano, and C. Schwanke,
``Three geometric constants for Morrey spaces'',
{\em Bull. Korean Math.} {\bf 56} (2019), 1569--1575

\bibitem{Lor} E. Llorens-Fuster,
``Zb\'{a}ganu constant and normal structure'',
{\em Fixed Point Theory} {\bf 9} (2008), 159--172

\bibitem{Lor1} E. Llorens-Fuster, E.M Mazcunan-Navarro, and S. Reich,
``The Ptolemy and Zb\'{a}ganu constants of normed spaces'',
{\em Nonlinear Anal.} {\bf 72} (2010), 3984--3993

\bibitem{Mut} A. Mu'tazili and H. Gunawan,
``Von Neumann-Jordan constant, James constant, and Dunkl-Williams constant for small Morrey spaces'',
arXiv: 1904.01712v2 [math.FA],

\bibitem{Saw} Y. Sawano,
``A thought on generalized Morrey spaces'',
{\em J. Indones. Math. Soc.} {\bf 25} (2019), 210--281

\bibitem{Yang} C. Yang and F. Wang,
``An extension of a simply inequality between Von Neumann-Jordan and James constants in Banach spaces'',
{\em Acta Math. Sinica Engl. Ser.} {\bf 9} (2017), 1287--1296

\bibitem{Ganu} G. Zb\'{a}ganu,
``An inequality of M. Radulescu and S. Radulescu which characterizes inner product spaces'',
{\em Rev. Roumaine Math. Puers Appl.} {\bf 47} (2001), 253--257

\end{thebibliography}
\end{document}